\newtheorem{theorem}{Theorem}
\newtheorem{definition}[theorem]{Definition}
\newtheorem{lemma}[theorem]{Lemma}
\newtheorem{remark}[theorem]{Remark}
\newenvironment{proof}[1][Proof]{\textbf{#1.} }{\ \rule{0.5em}{0.5em}}
\def \E{\mathbb{E}}
\begin{document}

\begin{frontmatter}



\title{ Neutral stochastic functional differential equation driven by
 fractional  Brownian motion and Poisson point processes}

\author[2]{Salah Hajji}
\ead{hajjisalahe@gmail.com}

\author[3]{El Hassan Lakhel}
\ead{e.lakhel@uca.ma}

\address[2]{ Department of Mathematics, Faculty of Sciences Semlalia, Cadi Ayyad University, 2390 Marrakesh, Morocco\\}
\address[3]{  National School of Applied Sciences, Cadi Ayyad University, 46000 Safi , Morocco}
\begin{abstract}
In this note we consider a class of neutral stochastic functional
differential equations  with finite delay driven simultaneously
by a fractional Brownian motion and a Poisson point processes  in a Hilbert space. We prove an existence and uniqueness result and  we establish some conditions
ensuring the exponential decay to zero in mean square for the mild
solution by means of the Banach fixed point principle.
\end{abstract}

\begin{keyword}
Mild solution; Semigroup of bounded linear operator; Fractional
powers of closed operators;  Fractional Brownian motion; Wiener
integral; Poisson point processes .



\MSC   60H15  \sep 60G15  \sep 60J65

\end{keyword}

\end{frontmatter}

\section{Introduction}
 In this paper, we study the existence, uniqueness and asymptotic behavior
  of mild solutions for a class of neutral functional stochastic differential equations with jumps described in the form:
\begin{equation}\label{eq1}
 \left\{\begin{array}{lll}
d[x(t)+g(t,x(t-r(t)))]&=&[Ax(t)+f(t,x(t-\rho(t))]dt+\sigma
(t)dB^H(t)\\&
+&\int_{\mathcal{U}}h(t,x(t-\theta(t)),y)\widetilde{N}(dt,dy),\;0\leq t \leq T,\nonumber\\
x(t)=\varphi(t) ,\;-\tau \leq t \leq 0.
\end{array}\right.
\end{equation}
where $\varphi\in \mathcal{D}:=D([-\tau,0],X$) the space of c\`adl\`ag functions
from $[-\tau,0]$  into $X$ equipped with the supremum norm
$|\varphi|_{\mathcal{D}}=sup_{s\in[-\tau,0]}\|\varphi(s)\|_X$ and  $A$ is the
infinitesimal generator of an analytic semigroup of bounded linear
operators, $(S(t))_{t\geq 0}$, in a Hilbert space $X$, $B^H$ is a
fractional Brownian motion on a real and separable Hilbert space
$Y$, $r,\; \rho,\; \theta:[0,+\infty)\rightarrow [0,\tau]\; (\tau
>0)$ are continuous and $f,g:[0,+\infty)\times X \rightarrow X,\;
\; \sigma:[0,+\infty) \rightarrow \mathcal{L}_2^0(Y,X)$,$\; \;$
$h:[0,+\infty)\times X\times \mathcal{U} \rightarrow X$ are appropriate
functions. Here $\mathcal{L}_2^0(Y,X)$ denotes the space of all
$Q$-Hilbert-Schmidt operators from $Y$ into $X$ (see section 2 below).\\
    We would like to mention that the theory for the stochastic differential equations (without
    delay) driven by a fractional Brownian motion (fBm) have recently been studied intensively
    (see e.g. \citet{Coutin}, \citet{Rascanu}, \citet{Ouknine}, \citet{Hu}, \cite{Saussereau} and the references therein).\\

    As, for the stochastic functional differential equations driven by a fBm, even much less has been
    done, as far as we know, there exist only a few papers published in this field. In \cite{Ferrante1}, the authors studied the existence and regularity
of the density  by using Skorohod integral based on the Malliavin calculus. \citet{Neuenkirch} studied the problem  by using rough path analysis.
     \cite{Ferrante2} studied the existence and convergence when the delay goes to zero by using the Riemann-Stieltjes integral.  Using also the Riemann-Stieltjes integral, \cite{boufoussi1} and \cite{boufoussi2} proved the existence and
    uniqueness of a mild solution and studied the dependence of the
    solution on the initial condition in finite and infinite dimensional space. Very recently,
     \cite{carab} have discussed the existence, uniqueness and exponential asymptotic behavior
     of mild solutions by using Wiener integral.\\

     By contrast, there has not been very much study of stochastic partial differential equations driven by jump processes, while these have
begun to gain attention recently. \citet{roc} showed by successive approximations the existence, uniqueness and large deviation
principle of stochastic evolution equations with jumps. In \citet{jin}, the exponential stability for neutral stochastic
partial differential  equation with a random perturbation of
the body forces in the form of Poisson and Brownian motion, were
studied. Under some circumstances, \citet{luo} established the
existence of strong solutions to stochastic partial functional
differential equations with Markovian switching and Poisson jumps,
meanwhile the moment exponential stability and almost sure
stability of mild solutions were investigated by the
Razumikhin Lyapunov type function argument and the comparison
principle, respectively.

On the other hand, to the best of our knowledge, there is no paper
which investigates the study of neutral stochastic functional differential
equations with delays driven both by fractional Brownian motion  and by  Poisson point processes. Thus, we will make the first attempt to study
such problem in this paper.\\
 Our results are inspired by the one in
\cite{boufoussi3} where the existence and uniqueness of mild
solutions to  model (\ref{eq1}) with $h=0$ is studied, as well as
some results on the  asymptotic behavior.\\

The  rest of this paper is organized as follows, In Section 2 we
introduce some notations, concepts, and basic results about
fractional Brownian motion, Poisson point processes,  Wiener
integral over Hilbert spaces and we recall some preliminary
results about analytic semi-groups and fractional power associated
to its generator. In section 3 by the Banach fixed point theorem
we consider a sufficient condition for the existence, uniqueness
and exponential decay  to zero in mean square  for   mild solutions
of  equation $(\ref{eq1})$.
\section{Preliminaries}

In this section, we collect some notions, conceptions and lemmas on Wiener integrals with respect to an infinite dimensional fractional Brownian
 and we recall some basic results about analytical semi-groups and  fractional powers of their infinitesimal generators, which will be used throughout the whole of this paper.\\
Let $(\Omega,\mathcal{F},\{\mathcal{F}_t\}_{t\geq0}, \mathbb{P})$
be a complete probability space satisfying the usual condition,
which means that the filtration is right continuous increasing
family and $\mathcal{F}_0$ contains all P-null sets.\\
Let $(\mathcal{U}, \mathcal{E}, \nu(du))$ be  a $\sigma$-finite measurable space. Given a stationary Poisson
point process $(p_t )_{t>0}$, which is defined on $(\Omega
,\mathcal{F}, P)$ with values in $\mathcal{U}$ and with
characteristic measure $\nu$ (see  \cite{ikeda}). We will denote by $N(t,du)$ be the counting measure of $p_t$ such that
$\widehat{N}(t, A) := \mathbb{E}(N(t, A)) = t\nu(A)$ for $A \in \mathcal{E}$.
Define $\widetilde{N}(t, du) := N(t, du) - t\nu(du)$, the Poisson
martingale measure generated by $p_t$.\\

Consider a time interval $[0,T]$ with arbitrary fixed horizon $T$
and let $\{\beta^H(t) , t \in [0, T ]\}$ the one-dimensional
fractional Brownian motion with Hurst parameter $H\in(1/2,1)$.
This means by definition that $\beta^H$ is a centered Gaussian
process with covariance function:
$$ R_H(s, t) =\frac{1}{2}(t^{2H} + s^{2H}-|t-s|^{2H}).$$
 Moreover $\beta^H$ has the following Wiener
integral representation:
\begin{equation}\label{rep}
\beta^H(t) =\int_0^tK_H(t,s)d\beta(s)
 \end{equation}
where $\beta = \{\beta(t) :\; t\in [0,T]\}$ is a Wiener process, and $K_H(t; s)$ is the kernel given by
$$K_H(t, s )=c_Hs^{\frac{1}{2}-H}\int_s^t (u-s)^{H-\frac{3}{2}}u^{H-\frac{1}{2}}du$$
for $t>s$, where $c_H=\sqrt{\frac{H(2H-1)}{\beta (2-2H,H-\frac{1}{2})}}$ and $\beta(,)$ denotes the Beta function. We put $K_H(t, s ) =0$ if $t\leq s$.\\
We will denote by $\mathcal{H}$ the reproducing kernel Hilbert space of the fBm. In fact $\mathcal{H}$ is the
closure of set of indicator functions $\{1_{[0;t]},  t\in[0,T]\}$ with respect to the scalar product
$$\langle 1_{[0,t]},1_{[0,s]}\rangle _{\mathcal{H}}=R_H(t , s).$$
The mapping $1_{[0,t]}\rightarrow \beta^H(t)$
 can be extended to an isometry between $\mathcal{H}$
and the first  Wiener chaos and
we will denote by $\beta^H(\varphi)$ the image of $\varphi$ by the previous isometry.

We recall that for $\psi,\varphi \in \mathcal{H}$ their scalar product in $\mathcal{H}$ is given by
$$\langle \psi,\varphi\rangle _{\mathcal{H}}=H(2H-1)\int_0^T\int_0^T\psi(s)\varphi(t)|t-s|^{2H-2}dsdt$$
Let us consider the operator $K_H^*$ from $\mathcal{H}$ to $L^2([0,T])$ defined by $$(K_H^*\varphi)(s)=\int_s^T\varphi(r)\frac{\partial K}{\partial r}(r,s)dr$$
We refer to \citet{nualart} for the proof of the fact that $K_H^*$ is an isometry between $\mathcal{H}$ and $L^2([0,T])$. Moreover for any $\varphi \in \mathcal{H}$, we have $$\beta^H(\varphi)=\int_0^T(K_H^*\varphi)(t)d\beta(t)$$
It follows from \citet{nualart} that the elements of $\mathcal{H}$ may be not functions but
distributions of negative order. In order to obtain a space of functions contained in $\mathcal{H}$, we consider the linear
space $|\mathcal{H}|$ generated by the measurable functions $\psi$ such that
$$\|\psi \|^2_{|\mathcal{H}|}:= \alpha_H  \int_0^T \int_0^T|\psi(s)||\psi(t)| |s-t|^{2H-2}dsdt<\infty$$
where $\alpha_H = H(2H-1)$. The space $|\mathcal{H}|$ is a Banach space with the norm  $\|\psi\|_{|\mathcal{H}|}$
and we have the following inclusions (see \citet{nualart})
\begin{lemma}\label{lem1}
$$\mathbb{L}^2([0,T])\subseteq \mathbb{L}^{1/H}([0,T])\subseteq |\mathcal{H}|\subseteq \mathcal{H}$$
and for any $\varphi\in \mathbb{L}^2([0,T])$, we have  $$\|\psi\|^2_{|\mathcal{H}|}\leq 2HT^{2H-1}\int_0^T |\psi(s)|^2ds$$
\end{lemma}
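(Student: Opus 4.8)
The plan is to establish the four assertions in turn. Three of them are essentially bookkeeping --- the nesting $\mathbb{L}^2([0,T])\subseteq\mathbb{L}^{1/H}([0,T])$, the continuous embedding $|\mathcal{H}|\subseteq\mathcal{H}$, and the explicit norm bound (which will itself re-prove $\mathbb{L}^2([0,T])\subseteq|\mathcal{H}|$) --- while the inclusion $\mathbb{L}^{1/H}([0,T])\subseteq|\mathcal{H}|$ carries the real content, and I expect it to be the main obstacle.

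For $\mathbb{L}^2([0,T])\subseteq\mathbb{L}^{1/H}([0,T])$ I would use only that $[0,T]$ has finite length and that $1/H\in(1,2)$ because $H\in(1/2,1)$: H\"older's inequality with exponents $2H$ and $2H/(2H-1)$ gives $\|\psi\|_{\mathbb{L}^{1/H}([0,T])}\le T^{H-1/2}\|\psi\|_{\mathbb{L}^2([0,T])}$. For $|\mathcal{H}|\subseteq\mathcal{H}$ I would merely insert absolute values into the defining integral of the $\mathcal{H}$-inner product: $\|\psi\|_{\mathcal{H}}^2=\alpha_H\int_0^T\int_0^T\psi(s)\psi(t)|s-t|^{2H-2}\,ds\,dt\le\alpha_H\int_0^T\int_0^T|\psi(s)||\psi(t)||s-t|^{2H-2}\,ds\,dt=\|\psi\|_{|\mathcal{H}|}^2$, so $\|\psi\|_{\mathcal{H}}\le\|\psi\|_{|\mathcal{H}|}$; since step functions belong to both spaces and are dense in $|\mathcal{H}|$, this promotes to a continuous embedding. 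For the explicit bound I would estimate $|\psi(s)||\psi(t)|\le\frac{1}{2}(|\psi(s)|^2+|\psi(t)|^2)$, use the symmetry of the kernel to reduce $\|\psi\|_{|\mathcal{H}|}^2$ to $\alpha_H\int_0^T|\psi(s)|^2\big(\int_0^T|s-t|^{2H-2}\,dt\big)\,ds$, compute the inner integral as $(s^{2H-1}+(T-s)^{2H-1})/(2H-1)\le 2T^{2H-1}/(2H-1)$, and substitute $\alpha_H=H(2H-1)$ to land on exactly $\|\psi\|_{|\mathcal{H}|}^2\le 2HT^{2H-1}\int_0^T|\psi(s)|^2\,ds$.

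The heart of the lemma is $\mathbb{L}^{1/H}([0,T])\subseteq|\mathcal{H}|$, and the tool I would reach for is the one-dimensional Hardy--Littlewood--Sobolev inequality: with $p=q=1/H$ and kernel exponent $2-2H\in(0,1)$ the admissibility relation $\frac{1}{p}+\frac{1}{q}+(2-2H)=2$ holds because $H+H+(2-2H)=2$, so there is a constant $C_H$ with $\int_{\mathbb{R}}\int_{\mathbb{R}}|\psi(s)||\psi(t)||s-t|^{2H-2}\,ds\,dt\le C_H\|\psi\|_{\mathbb{L}^{1/H}(\mathbb{R})}^2$; applied to $\psi$ extended by zero outside $[0,T]$ this gives $\|\psi\|_{|\mathcal{H}|}^2\le\alpha_H C_H\|\psi\|_{\mathbb{L}^{1/H}([0,T])}^2<\infty$. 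I anticipate this to be the only step needing genuine input rather than direct computation. A secondary point to handle carefully is that $|\mathcal{H}|$ is a space of honest functions whereas $\mathcal{H}$ may contain distributions of negative order, so ``$|\mathcal{H}|\subseteq\mathcal{H}$'' must be read as the continuous embedding just described --- obtained by completion and identification of limits --- rather than a literal set-theoretic containment.
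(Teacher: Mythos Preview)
Your proposal is correct, and each of the four steps is handled by the standard argument: the $\mathbb{L}^2\subseteq\mathbb{L}^{1/H}$ inclusion is H\"older on a finite interval, the embedding $|\mathcal{H}|\subseteq\mathcal{H}$ is the trivial estimate by absolute values (with the caveat you flag about distributions), the explicit constant comes out of the elementary inequality $ab\le\frac12(a^2+b^2)$ together with $\int_0^T|s-t|^{2H-2}\,dt\le 2T^{2H-1}/(2H-1)$, and the substantive inclusion $\mathbb{L}^{1/H}\subseteq|\mathcal{H}|$ is exactly Hardy--Littlewood--Sobolev with the exponents you check.

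There is nothing to compare against, however: the paper does not prove this lemma at all. It is stated as a quotation from \citet{nualart} (the reference appears in the sentence immediately preceding the lemma) and is used only as a black box --- in fact the paper never invokes the chain of inclusions directly, only the derived inequality in Lemma~\ref{lem2}. Your argument is essentially the one found in Nualart's book, so you have reconstructed the intended proof rather than diverged from it.
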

Let $X$ and $Y$ be two real, separable Hilbert spaces and let $\mathcal{L}(Y,X)$ be the space of bounded linear operator from $Y$ to $X$. For the sake of convenience, we shall use the same notation to denote the norms in $X,Y$ and $\mathcal{L}(Y,X)$.
Let $Q\in \mathcal{L}(Y,Y)$ be an operator defined by $Qe_n=\lambda_n e_n$ with finite trace
 $trQ=\sum_{n=1}^{\infty}\lambda_n<\infty$. where $\lambda_n \geq 0 \; (n=1,2...)$ are non-negative
  real numbers and $\{e_n\}\;(n=1,2...)$ is a complete orthonormal basis in $Y$.
 Let $B^H=(B^H(t))$, independent of the Poisson point process, be  $Y-$ valued fbm on
  $(\Omega,\mathcal{F}, \mathbb{P})$ with covariance $Q$ as
 $$B^H(t)=B^H_Q(t)=\sum_{n=1}^{\infty}\sqrt{\lambda_n}e_n\beta_n^H(t)$$
 where $\beta_n^H$ are real, independent fBm's. This process is  Gaussian, it
 starts from $0$, has zero mean and covariance:
 $$E\langle B^H(t),x\rangle\langle B^H(s),y\rangle=R(s,t)\langle Q(x),y\rangle \;\; \mbox{for all}\; x,y \in Y \;\mbox {and}\;  t,s \in [0,T]$$
In order to define Wiener integrals with respect to the $Q$-fBm, we introduce the space $\mathcal{L}_2^0:=\mathcal{L}_2^0(Y,X)$  of all $Q$-Hilbert-Schmidt operators $\psi:Y\rightarrow X$. We recall that $\psi \in \mathcal{L}(Y,X)$ is called a $Q$-Hilbert-Schmidt operator, if
$$  \|\psi\|_{\mathcal{L}_2^0}^2:=\sum_{n=1}^{\infty}\|\sqrt{\lambda_n}\psi e_n\|^2 <\infty$$
and that the space $\mathcal{L}_2^0$ equipped with the inner product
$\langle \varphi,\psi \rangle_{\mathcal{L}_2^0}=\sum_{n=1}^{\infty}\langle \varphi e_n,\psi e_n\rangle$ is a separable Hilbert space.\\

Now, let $\phi(s);\,s\in [0,T]$ be a function with values in $\mathcal{L}_2^0(Y,X)$, The Wiener integral of $\phi$ with respect to $B^H$ is defined by

\begin{equation}\label{int}
\int_0^t\phi(s)dB^H(s)=\sum_{n=1}^{\infty}\int_0^t \sqrt{\lambda_n}\phi(s)e_nd\beta^H_n(s)=\sum_{n=1}^{\infty}\int_0^t \sqrt{\lambda_n}(K_H^*(\phi e_n)(s)d\beta_n(s)
\end{equation}
where $\beta_n$ is the standard Brownian motion used to  present $\beta_n^H$ as in $(\ref{rep})$.\\
Now, we end this subsection by stating the following result which is fundamental to prove our result. It can be proved by  similar arguments as those used to prove   Lemma 2 in \cite{carab}.
\begin{lemma}\label{lem2}
If $\psi:[0,T]\rightarrow \mathcal{L}_2^0(Y,X)$ satisfies $\int_0^T \|\psi(s)\|^2_{\mathcal{L}_2^0}ds<\infty$
 then the above sum in $(\ref{int})$ is well defined as a $X$-valued random variable and
 we have$$ \mathbb{E}\|\int_0^t\psi(s)dB^H(s)\|^2\leq 2Ht^{2H-1}\int_0^t \|\psi(s)\|_{\mathcal{L}_2^0}^2ds$$
\end{lemma}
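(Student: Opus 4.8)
The plan is to reduce the claim to the one-dimensional estimate already implicit in Lemma~\ref{lem1} and the Wiener-integral isometry recalled above. Write the $Q$-fBm in its series form $B^H(t)=\sum_{n=1}^{\infty}\sqrt{\lambda_n}e_n\beta_n^H(t)$, so that by definition~(\ref{int}) we have $\int_0^t\psi(s)dB^H(s)=\sum_{n=1}^{\infty}\int_0^t\sqrt{\lambda_n}(K_H^*(\psi e_n))(s)d\beta_n(s)$, a sum of stochastic integrals against \emph{independent} standard Brownian motions $\beta_n$. The first step is to check that each summand is a well-defined $X$-valued random variable: since $K_H^*$ is an isometry from $\mathcal{H}$ onto $L^2([0,T])$, and since by Lemma~\ref{lem1} the hypothesis $\int_0^T\|\psi(s)\|_{\mathcal{L}_2^0}^2ds<\infty$ forces each $\psi e_n\in L^2([0,T])\subseteq|\mathcal{H}|\subseteq\mathcal{H}$, the function $K_H^*(\psi e_n)$ lies in $L^2([0,T])$ and the Itô--Wiener integral $\int_0^t\sqrt{\lambda_n}(K_H^*(\psi e_n))(s)d\beta_n(s)$ is a centered Gaussian $X$-valued random variable.

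Next I would compute the second moment. Because the $\beta_n$ are independent, the cross terms vanish in expectation, so
\begin{equation*}
\mathbb{E}\Big\|\int_0^t\psi(s)dB^H(s)\Big\|^2=\sum_{n=1}^{\infty}\lambda_n\,\mathbb{E}\Big\|\int_0^t(K_H^*(\psi e_n))(s)d\beta_n(s)\Big\|^2=\sum_{n=1}^{\infty}\lambda_n\,\big\|K_H^*(\psi e_n)\big\|_{L^2([0,t])}^2.
\end{equation*}
Now the isometry property of $K_H^*$ gives $\|K_H^*(\psi e_n)\|_{L^2([0,t])}^2=\|\psi e_n\|_{\mathcal{H}}^2$ (restricted to $[0,t]$), and the second inequality in Lemma~\ref{lem1} bounds this by $2Ht^{2H-1}\int_0^t\|\psi(s)e_n\|^2ds$. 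Summing over $n$ and using $\sum_{n=1}^{\infty}\|\sqrt{\lambda_n}\psi(s)e_n\|^2=\|\psi(s)\|_{\mathcal{L}_2^0}^2$ together with the monotone convergence theorem to interchange the sum and the integral yields
\begin{equation*}
\mathbb{E}\Big\|\int_0^t\psi(s)dB^H(s)\Big\|^2\leq 2Ht^{2H-1}\int_0^t\|\psi(s)\|_{\mathcal{L}_2^0}^2ds,
\end{equation*}
which is finite by hypothesis; this simultaneously shows that the series~(\ref{int}) converges in $L^2(\Omega;X)$, hence defines a genuine $X$-valued random variable, and establishes the asserted bound.

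The only genuinely delicate point is the passage from the Hilbert-space scalar $\|K_H^*(\psi e_n)\|_{L^2}^2$ to the $L^2$-norm of $\psi e_n$ and the final interchange of summation and integration; these are exactly the steps handled in Lemma~2 of \cite{carab} in the scalar-coefficient case, and the argument here is the same once one observes that all quantities are non-negative, so Tonelli's theorem applies without integrability concerns. The rest is bookkeeping with orthonormal bases. I do not expect any serious obstacle beyond making the convergence of the series precise, which the $L^2$-bound itself delivers.
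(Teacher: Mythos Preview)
Your proposal is correct and follows exactly the route the paper indicates: the paper does not prove Lemma~\ref{lem2} itself but states that ``it can be proved by similar arguments as those used to prove Lemma~2 in \cite{carab},'' and what you have written---series expansion of $B^H$, independence of the $\beta_n$, the $K_H^*$ isometry, the scalar bound from Lemma~\ref{lem1}, and Tonelli to sum over $n$---is precisely that argument. There is nothing to add.
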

Now we turn to state some notations and basic facts about  the theory of semi-groups and fractional power operators.\\
Let $A:D(A)\rightarrow X$  be the infinitesimal generator of an
analytic semigroup, $(S (t))_{t\geq 0}$, of bounded linear
operators on $X$. For the theory of strongly continuous semigroup,
we refer to \cite{pazy}  and \cite{gold}.  We will
point out here some notations and properties that will be used in
this work. It is well known that there exist $M \geq 1$
and $\lambda \in \mathbb{R}$ such that $\|S (t)\|\leq M
e^{\lambda t}$ for every $t \geq 0$. \\
  If $(S (t))_{t\geq 0}$ is a
uniformly bounded and analytic semigroup such that $0 \in
\rho(A)$,  where $\rho (A)$ is the resolvent set of $A$, then it
is possible to define the fractional power $(-A)^{\alpha}$  for $0
< \alpha \leq 1$, as a closed linear operator on its domain
$D(-A)^{\alpha}$. Furthermore, the subspace $D(-A)^{\alpha}$ is
dense in $X$, and the expression
$$\|h\|_{\alpha} =\|(-A)^{\alpha}h\|$$ defines a norm in
$D(-A)^{\alpha}$. If $X_{\alpha}$ represents the space
$D(-A)^{\alpha}$ endowed with the norm $\|.\|_{\alpha}$, then the
following properties are well known (cf. \cite{pazy}, p. 74).
\begin{lemma}\label{lem3} Suppose that the preceeding conditions are satisfied.\\
(1) Let $0<\alpha \leq  1$. Then $X_{\alpha}$ is a Banach space.\\
(2) If $0 <\beta \leq \alpha $ then the injection $X_{\alpha}
\hookrightarrow X_{\beta}$ is continuous.\\
 (3) For every $0<\alpha \leq 1$ there exists $M_{\alpha} > 0 $ such that
 $$\| (-A)^{\alpha}S (t)\|\leq M_{\alpha}t^{-\alpha}e^{-\lambda t}
 , \;\;\;\;t>0,\;\; \lambda>0 .$$
 \end{lemma}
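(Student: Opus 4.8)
The plan is to derive all three parts from standard analytic-semigroup theory as developed in Pazy, since the statement is essentially a compilation of well-known facts about fractional powers of sectorial operators; the role of the proof is to indicate precisely which classical results are being invoked and how the hypotheses feed into them.

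For part (1), the operator $(-A)^{\alpha}$ is closed and injective on $D((-A)^{\alpha})$, so its inverse $(-A)^{-\alpha}$ is a bounded operator on $X$ (bounded because $0\in\rho(A)$ guarantees $(-A)^{-1}$ is bounded, hence so is its fractional power, which can be written via the standard integral formula $(-A)^{-\alpha}=\frac{1}{\Gamma(\alpha)}\int_0^\infty t^{\alpha-1}S(t)\,dt$ using the exponential decay coming from $0\in\rho(A)$). Then $\|h\|_\alpha=\|(-A)^\alpha h\|$ is a genuine norm, and completeness of $X_\alpha$ follows because $(-A)^\alpha$ is closed: if $(h_n)$ is Cauchy in $\|\cdot\|_\alpha$, then $((-A)^\alpha h_n)$ is Cauchy in $X$, hence converges to some $y$, and $h_n=(-A)^{-\alpha}(-A)^\alpha h_n\to (-A)^{-\alpha}y=:h$; closedness gives $h\in D((-A)^\alpha)$ with $(-A)^\alpha h=y$, so $h_n\to h$ in $X_\alpha$.

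For part (2), for $0<\beta\le\alpha\le 1$ one uses the moment inequality $\|(-A)^\beta h\|\le C\|(-A)^\alpha h\|$ valid for $h\in D((-A)^\alpha)$, which is itself a consequence of writing $(-A)^{\beta}=(-A)^{\beta-\alpha}(-A)^{\alpha}$ with $(-A)^{\beta-\alpha}$ bounded (since $\beta-\alpha\le 0$); this both shows $D((-A)^\alpha)\subseteq D((-A)^\beta)$ and that the inclusion $X_\alpha\hookrightarrow X_\beta$ is continuous. For part (3), the estimate $\|(-A)^\alpha S(t)\|\le M_\alpha t^{-\alpha}e^{-\lambda t}$ for $t>0$ is the standard smoothing estimate for analytic semigroups: analyticity gives $\|A S(t)\|\le C t^{-1}$ locally, the fractional power $(-A)^\alpha S(t)=(-A)^\alpha S(t/2)\,S(t/2)$ combined with $\|(-A)^\alpha S(s)\|\le C_\alpha s^{-\alpha}$ near $0$, and the exponential factor $e^{-\lambda t}$ is produced by the spectral bound implied by $0\in\rho(A)$ (after the usual rescaling argument so that the growth bound is strictly negative).

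The only genuine subtlety — and hence the "main obstacle" — is bookkeeping the exponential factor $e^{-\lambda t}$ correctly: the generic bound $\|S(t)\|\le Me^{\lambda t}$ has $\lambda\in\R$ of arbitrary sign, whereas part (3) asserts decay with $\lambda>0$. This is not automatic from being merely analytic; it requires the hypothesis $0\in\rho(A)$ (equivalently, that the semigroup is exponentially stable after the standard reduction), which pushes the spectrum into a left half-plane and lets one absorb a negative exponential into the estimate. I would state this reduction explicitly at the start and then quote Pazy (p.~74) for the precise constants, rather than reprove the analytic-semigroup estimates from scratch.
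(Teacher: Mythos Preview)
Your proposal is correct and, in fact, goes further than the paper itself: the paper does not prove Lemma~\ref{lem3} at all but simply records these properties as ``well known (cf.\ \cite{pazy}, p.~74)'' and moves on. Your sketch of the standard arguments (closedness of $(-A)^\alpha$ for completeness of $X_\alpha$, boundedness of $(-A)^{\beta-\alpha}$ for the continuous embedding, and the analytic-semigroup smoothing estimate for part~(3)) is exactly what underlies the cited reference, so your approach and the paper's are aligned --- you have just unpacked what the paper leaves as a citation.
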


 \section{Main Results}

In this section, we consider existence, uniqueness  and
exponential stability of mild solution  to Equation (\ref{eq1}).
Our main method is the Banach fixed point principle. First we
define the space $\Pi$ of the c\`adl\`ag processes $x(t)$ as
follows:

\begin{definition}
Let the space $\Pi$ denote the set of all c\`adl\`ag processes
$x(t)$ such that  $x(t)=\varphi(t)\; t\in [-\tau,0]$ and  there
exist some constants $M^*=M^*(\varphi,a)>0$ and $a>0$
$$
\E\|x(t)\|^2\leq M^*e^{-at},\qquad\forall\; t\geq0.
$$
\end{definition}
\begin{definition}
$\|.\|_\Pi$ denotes the norm in $\Pi$ wich is defined by
$$
\|x\|_\Pi:= sup_{t\geq0}\E\|x(t)\|_X^2 \qquad for \;
 x\in \Pi.
$$
\end{definition}

\begin{remark}

It is routine to check that $\Pi$ is a Banach space  endowed with
the  norm $\|.\|_\Pi$.
 \end{remark}
 In order to obtain our main result,  we assume that the following conditions hold.
\begin{itemize}
\item [$(\mathcal{H}.1)$]
 $A$ is the infinitesimal generator of an analytic semigroup, $(S (t))_{t\geq
0}$, of bounded linear operators on $X$. Further, to avoid
unnecessary notations, we suppose that $0 \in \rho (A)$, and that,
see Lemma \ref{lem3},$$ \| S (t)\|\leq Me^{-\lambda t}\;
\;\;\mbox{and}\;\; \|(-A)^{1-\beta}S(t)\|\leq
\frac{M_{1-\beta}}{t^{1-\beta}} $$ for some constants
$M,\;\lambda\; ,\; M_{1-\beta}$ and every $t \in [0,T].$
\item [$(\mathcal{H}.2)$]There   exist positive constant $K_1>0$ such that, for all $t\in [0,T]$ and $x,y\in X $
  $$
  \|f(t,x)-f(t,y)\|^2\leq K_1 \|x-y\|^2.
  $$
\item [$(\mathcal{H}.3)$]There exist constants $0 <
\beta < 1,\; K_2>0$ such
 that the function $g$ is $X_{\beta}$-valued and satisfies for all  $t\in [0,T]$ and $x,y\in  X$
 $$
 \|(-A)^{\beta}g(t,x)-(-A)^{\beta}g(t,y)\|^2 \leq K_2 \|x-y\|^2.
 $$
\item [$(\mathcal{H}.4)$]The function $(-A)^{\beta}g$ is continuous in the quadratic mean sense:
$$\mbox{For all}\;\; x\in D([0,T], \mathbb{L}^2(\Omega,X)),\;\;\lim_{t\rightarrow s}\mathbb{E}\|(-A)^{\beta}g(t,x(t))-(-A)^{\beta}g(s,x(s))\|^2=0.$$
\item [$(\mathcal{H}.5)$] There exists some $\gamma>0$ such that the  function $\sigma:[0,+\infty)\rightarrow \mathcal{L}_2^0(Y,X)$
satisfies
 $$\int_0^T e^{2\gamma s}\|\sigma(s)\|^2_{\mathcal{L}_2^0}ds< \infty,\;\; \forall\; T>0. $$
 \item [$(\mathcal{H}.6)$] There   exist positive constant $K_3>0$ such that, for all $t\in [0,T]$ and $x,y\in X $
 $$
\int_{\mathcal{U}}\|h(t,x,z)-h(t,y,z)\|_X^2\nu(dz)\leq K_3\|x-y\|_X^2.
 $$
 We further assume that $g(t,0)=f(t,0)=h(t,0,z)=0$ for all
 $t\geq0$ and $z\in \mathcal{U}$. Moreover, we suppose  that $\E|\varphi|_\mathcal{D}^2 <\infty$.
\end{itemize}

Similar to the deterministic situation we give the following definition of mild solutions
for equation (\ref{eq1}).
\begin{definition}
A $X$-valued  process $\{x(t),\;t\in[-\tau,T]\}$, is called  a mild
solution of equation (\ref{eq1}) if
\begin{itemize}
\item[$i)$] $x(.)$ has c\`adl\`ag path, and
$\int_{0}^T\|x(t)\|^2dt<\infty$ almost surely;
\item[$ii)$] $x(t)=\varphi(t), \, -\tau \leq t \leq 0$.
\item[$iii)$]For arbitrary $t \in [0,T]$, we have

$$
\left\{\begin{array}{ll}
x(t)&=S(t)(\varphi(0)+g(0,\varphi(-r(0))))-g(t,x(t-r(t)))\\
&- \int_0^t AS(t-s)g(s,x(s-r(s)))ds +\int_0^t S(t-s)f(s,x(s-\rho (s))ds\\
&+\int_0^t S(t-s)\sigma(s)dB^H(s)\\
&+\int_0^t \int_ {\mathcal{U}}
S(t-s)h(s,x(s-\theta(s)),y)\widetilde{N}(ds,dy)\;\;\;
\mathbb{P}-a.s
\end{array}\right.
$$
\end{itemize}
\end{definition}
We can now state the main result of this paper.
\begin{theorem}\label{th1}
Suppose that $(\mathcal{H}.1)-(\mathcal{H}.6)$ hold and that
 $$
4(K_2\|(-A)^{-\beta}\|^2+K_2
 M_{1-\beta}^2\lambda^{-2\beta}\Gamma(\beta)^2+K_1 M^2 \lambda^{-2}+M^2
 K_3(2\lambda)^{-1})<1
$$
where $\Gamma(.)$ is the Gamma function, $M_{1-\alpha}$ is the
corresponding constant in Lemme \ref{lem3}. \\
If the initial value $\varphi(t)$ satisfies
$$
\E\|\varphi(t)\|^2\leq M_0 \E|\varphi|_{\mathcal{D}}^2 e^{-at},\quad
t\in[-\tau,0],
$$
for some $M_0>0$, $a>0$; then, for all $T>0$, the equation
(\ref{eq1}) has a unique mild solution on $[-\tau,T]$ and is
exponential decay to zero in mean square, i.e., there exists a
pair of positive constants $a>0$ and $M^*=M^*(\varphi,a)$ such that
$$
\E\|x(t)\|^2\leq M^*e^{-at}, \; \forall t\geq0.
$$
\end{theorem}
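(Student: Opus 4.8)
The plan is to apply the Banach fixed point theorem on the space $\Pi$. I would define the operator $\Psi$ on $\Pi$ by setting $(\Psi x)(t)=\varphi(t)$ for $t\in[-\tau,0]$ and, for $t\in[0,T]$,
$$
(\Psi x)(t)=S(t)(\varphi(0)+g(0,\varphi(-r(0))))-g(t,x(t-r(t)))-\int_0^t AS(t-s)g(s,x(s-r(s)))\,ds
$$
$$
+\int_0^t S(t-s)f(s,x(s-\rho(s)))\,ds+\int_0^t S(t-s)\sigma(s)\,dB^H(s)+\int_0^t\int_{\mathcal{U}}S(t-s)h(s,x(s-\theta(s)),y)\,\widetilde{N}(ds,dy).
$$
The argument then has two parts: (i) $\Psi$ maps $\Pi$ into itself, and (ii) $\Psi$ is a contraction on $\Pi$. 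A mild solution is exactly a fixed point of $\Psi$, so both existence/uniqueness and the exponential-decay estimate $\E\|x(t)\|^2\le M^*e^{-at}$ follow at once once we know $\Psi$ is a contraction on $\Pi$.

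For part (i), I would split $(\Psi x)(t)$ into the six terms $I_1,\dots,I_6$ and bound $\E\|(\Psi x)(t)\|^2\le 6\sum_{i=1}^6\E\|I_i(t)\|^2$ by the elementary inequality $\|\sum a_i\|^2\le n\sum\|a_i\|^2$. For $I_1$ use $\|S(t)\|\le Me^{-\lambda t}$ together with $(\mathcal{H}.3)$ and $\E|\varphi|_{\mathcal{D}}^2<\infty$ to get exponential decay. For $I_2=-g(t,x(t-r(t)))$ write $g=(-A)^{-\beta}(-A)^{\beta}g$, use $\|(-A)^{-\beta}\|$ and $(\mathcal{H}.3)$ (plus $g(t,0)=0$) to bound $\E\|I_2(t)\|^2\le K_2\|(-A)^{-\beta}\|^2\E\|x(t-r(t))\|^2$, which decays like $e^{-a(t-\tau)}$ since $x\in\Pi$ and $r(t)\le\tau$. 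For $I_3$, insert $AS(t-s)g=(-A)^{1-\beta}S(t-s)(-A)^{\beta}g$, use Lemma \ref{lem3}(3) giving the factor $M_{1-\beta}(t-s)^{-(1-\beta)}e^{-\lambda(t-s)}$, apply Cauchy–Schwarz in the form $\left(\int_0^t \phi(s)\psi(s)\,ds\right)^2\le\left(\int_0^t\phi(s)\,ds\right)\left(\int_0^t\phi(s)\psi(s)^2\,ds\right)$ with $\phi(s)=(t-s)^{-(1-\beta)}e^{-\lambda(t-s)}$, recognizing $\int_0^\infty u^{\beta-1}e^{-\lambda u}\,du=\lambda^{-\beta}\Gamma(\beta)$; this produces the $\Gamma(\beta)^2\lambda^{-2\beta}$ factor. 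For $I_4$ use a similar Cauchy–Schwarz split with $\phi(s)=e^{-\lambda(t-s)}$ and $(\mathcal{H}.2)$, giving $\lambda^{-2}$. For $I_5$ use Lemma \ref{lem2}, the bound $\|S(t-s)\sigma(s)\|_{\mathcal{L}_2^0}\le Me^{-\lambda(t-s)}\|\sigma(s)\|_{\mathcal{L}_2^0}$, and $(\mathcal{H}.5)$; here one chooses the decay rate $a$ smaller than $2\gamma\wedge 2\lambda$ so that $\int_0^t e^{-2\lambda(t-s)}\|\sigma(s)\|_{\mathcal{L}_2^0}^2\,ds=O(e^{-at})$. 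For $I_6$ use the Poisson isometry $\E\|\int_0^t\int_{\mathcal{U}}S(t-s)h(s,x(s-\theta(s)),y)\,\widetilde{N}(ds,dy)\|^2=\int_0^t\int_{\mathcal{U}}\E\|S(t-s)h(s,x(s-\theta(s)),y)\|^2\,\nu(dy)\,ds$ together with $(\mathcal{H}.6)$ and $\|S(t-s)\|\le Me^{-\lambda(t-s)}$, the $s$-integral of $e^{-2\lambda(t-s)}$ yielding the $(2\lambda)^{-1}$ factor. Collecting, each term is dominated by a constant times $e^{-at}$ for a suitable $a>0$, so $\Psi x\in\Pi$.

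For part (ii), take $x,y\in\Pi$; the first term $I_1$ cancels in the difference, and the remaining five differences are estimated exactly as above but now against $\sup_{s\ge 0}\E\|x(s)-y(s)\|^2=\|x-y\|_\Pi$ (using the Lipschitz hypotheses $(\mathcal{H}.2),(\mathcal{H}.3),(\mathcal{H}.6)$ in place of the linear-growth bounds). One obtains
$$
\E\|(\Psi x)(t)-(\Psi y)(t)\|^2\le 4\bigl(K_2\|(-A)^{-\beta}\|^2+K_2 M_{1-\beta}^2\lambda^{-2\beta}\Gamma(\beta)^2+K_1M^2\lambda^{-2}+M^2K_3(2\lambda)^{-1}\bigr)\|x-y\|_\Pi,
$$
where the prefactor $4$ appears because the difference now has only four genuinely varying terms (the $\sigma$-term drops out, being independent of $x$). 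The hypothesis of the theorem says this constant is $<1$, so $\Psi$ is a strict contraction on the Banach space $\Pi$ (Remark after Definition of $\|\cdot\|_\Pi$), hence has a unique fixed point, which is the unique mild solution and, by membership in $\Pi$, satisfies $\E\|x(t)\|^2\le M^*e^{-at}$.

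The main obstacle I anticipate is not conceptual but bookkeeping: choosing a single decay exponent $a>0$ that works simultaneously for all six terms. The neutral term $I_3$ and the drift term $I_4$ involve convolutions $\int_0^t (t-s)^{-(1-\beta)}e^{-\lambda(t-s)}\E\|x(s-r(s))\|^2\,ds$, and to extract $e^{-at}$ from these one needs $a<2\lambda$ and must absorb the shift $x(s-r(s))$ (costing a harmless factor $e^{a\tau}$); the stochastic convolution $I_5$ additionally forces $a\le 2\gamma$. So the correct statement is that $a$ is taken as any positive number strictly less than $2\lambda\wedge 2\gamma$, and then $M^*$ is assembled from $M,\lambda,\gamma,\Gamma(\beta),\|(-A)^{-\beta}\|$, the $K_i$, $\E|\varphi|_{\mathcal{D}}^2$ and $e^{a\tau}$; one should also verify at the outset that $(\mathcal{H}.4)$ together with the continuity of $r,\rho,\theta$ and standard properties of the stochastic convolutions guarantees that $\Psi x$ indeed has a càdlàg modification, so that $\Psi$ genuinely acts on $\Pi$.
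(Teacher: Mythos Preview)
Your proposal is correct and follows essentially the same route as the paper: define $\Psi$ on $\Pi$, split $\E\|\Psi(x)(t)\|^2$ into the six terms $I_1,\dots,I_6$ and bound each by a constant times an exponential (using $(\mathcal{H}.1)$--$(\mathcal{H}.6)$, Lemma~\ref{lem2}, Lemma~\ref{lem3}, the Gamma-integral identity, and the Poisson isometry), verify the c\`adl\`ag property, and then estimate the four surviving terms in $\Psi(x)-\Psi(y)$ to obtain the contraction constant in the hypothesis. The only minor slip is in your bookkeeping remark: because of the H\"older/Cauchy--Schwarz split used for $I_3$ and $I_4$, the paper actually needs $a<\lambda$ (not merely $a<2\lambda$), and then takes the overall rate as $\min(a,\lambda,\gamma)$.
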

\begin{proof}
Define the mapping $\Psi$ on $\Pi$ as follows:
$$
\Psi(x)(t):=\varphi(t), \qquad t\in [-\tau,0],
$$
 and for $t\in [0,T]$
 $$
 \begin{array}{ll}
 \Psi(x)(t)&=S(t)(\varphi(0)+g(0,\varphi(-r(0))))-g(t,x(t-r(t)))-\int_0^t AS(t-s)g(s,x(s-r(s)))ds\\
&+\int_0^t S(t-s)f(s-\rho (s))ds+\int_0^tS(t-s)\sigma(s)dB^H(s)\\
&+\int_0^t \int_ {\mathcal{U}}
S(t-s)h(s,x(s-\theta(s)),y)\widetilde{N}(ds,dy).
 \end{array}
 $$
 Then it is clear that to prove the existence of
mild solutions to equation (\ref{eq1}) is equivalent to find a fixed
point for the operator $\Psi$.\\

We will show by using Banach
fixed point theorem that $\Psi$ has a unique fixed point. First we
show that $\Psi(\Pi)\subset \Pi$.\\

Let $x(t)\in \Pi$, then we have
\begin{eqnarray}\label{eqa}
  \E\|\Psi(x)(t)\|^2&\leq & 6 \E \|S(t)(\varphi(0)+g(0,\varphi(-r(0))))\|^2\nonumber\\
 &+&6 \E \|g(t,x(t-r(t)))\|^2+6 \E \|
 \int_0^t AS(t-s)g(s,x(s-r(s)))ds\|^2\nonumber\\
&+&6 \E \|\int_0^t S(t-s)f(s-\rho (s))ds\|^2+6 \E \|\int_0^tS(t-s)\sigma(s)dB^H(s)\|^2\nonumber\\
&+&6 \E \|\int_0^t \int_ {\mathcal{U}}
S(t-s)h(s,x(s-\theta(s)),y)\widetilde{N}(ds,dy)\|^2\nonumber\\
&:=&6(I_1+I_2+I_3+I_4+I_5+I_6).
 \end{eqnarray}
Now, let us estimate the terms on the right of the inequality (\ref{eqa}).\\
Let $M^*=M^*(\varphi,a)>0$ and $a>0$ such that
$$ \E\|x(t)\|^2\leq M^*e^{-at},\;\;\; \forall\; t\geq 0. $$
Without loss of generality we
may assume that $0<a<\lambda$. Then, by assumption $(\mathcal{H}.1)$ we
have
\begin{equation}\label{eqa1}
I_1\leq M^2\E\|\varphi(0)+g(0,\varphi(-r(0)))\|^2e^{-\lambda
t}\leq C_1 e^{-\lambda t}
\end{equation}
where $ C_1=M^2\E\|\varphi(0)+g(0,\varphi(-r(0)))\|^2<+\infty$.\\
 By using assumption $(\mathcal{H}.3)$  and the fact that the operator $(-A)^{-\beta}$ is bounded, we obtain that

 \begin{eqnarray}\label{eqa2}
 I_2&\leq & \|(-A)^{-\beta}\|^2\mathbb{E}\|(-A)^{\beta}g(t,x(t-r(t)))-(-A)^{\beta}g(t,0)\|^2\nonumber\\
 &\leq & K_2\|(-A)^{-\beta}\|^2\mathbb{E}\|x(t-r(t))\|^2\nonumber\\
 &\leq & K_2\|(-A)^{-\beta}\|^2(M^*e^{-(t-r(t))}+\E\|\varphi(t-r(t))\|^2)\nonumber\\
 &\leq &
 K_2\|(-A)^{-\beta}\|^2(M^*+M_0\E|\varphi|_{\mathcal{D}}^2)e^{-(t-r(t))}\nonumber\\
 &\leq &
 K_2\|(-A)^{-\beta}|^2(M^*+M_0\E|\varphi|_{\mathcal{D}}^2)e^{-at}e^{a\tau}\nonumber\\
 &\leq & C_2 e^{-at}
 \end{eqnarray}
 where $  C_2=K_2\|(-A)^{-\beta}\|^2(M^*+M_0\E|\varphi|_{\mathcal{D}}^2)e^{a\tau}<+\infty$.\\
To estimate $I_3$, we use the trivial identity

\begin{equation}\label{gam}
c^{-\alpha}=\frac{1}{\Gamma(\alpha)}\int_0^{+\infty}t^{\alpha-1}e^{-ct}\;
\; ,\;\forall \; c>0.
\end{equation}
Using H\"older's inequality, Lemma \ref{lem3} together with  assumption
$(\mathcal{H}.3)$  and the  identity  (\ref{gam}), we get
 \begin{eqnarray}\label{eqa3}
I_3&\leq &\E \|
 \int_0^t AS(t-s)g(s,x(s-r(s)))ds\|^2\nonumber\\
 &\leq &\int_0^t \|(-A)^{1-\beta}S(t-s)\|ds\int_0^t\| (-A)^{1-\beta}S(t-s)\|\E\|(-A)^{\beta}g(s,x(s-r(s)))\|^2ds\nonumber\\
&\leq &
M_{1-\beta}^2K_2\int_0^t(t-s)^{\beta-1}e^{-\lambda(t-s)}ds
\int_0^t(t-s)^{\beta-1}e^{-\lambda(t-s)}\E\|x(s-r(s))\|^2ds\nonumber\\
&\leq &
M_{1-\beta}^2K_2\lambda^{-\beta}\Gamma(\beta)
\int_0^t(t-s)^{\beta-1}e^{-\lambda(t-s)}(M^*
+M_0\E|\varphi|_\mathcal{D}^2)e^{-as}e^{a\tau}ds\nonumber\\
&\leq &
M_{1-\beta}^2K_2\lambda^{-\beta}\Gamma(\beta)(M^*+M_0\E|\varphi|_\mathcal{D}^2)
e^{-at}e^{a\tau}\int_0^t(t-s)^{\beta-1}e^{(a-\lambda)(t-s)}ds\nonumber\\
&\leq &
M_{1-\beta}^2K_2\lambda^{-\beta}\Gamma^2(\beta)(\lambda-a)^{-1}
(M^*+M_0\E|\varphi|_\mathcal{D}^2)e^{a\tau}e^{-at}\nonumber\\
&\leq & C_3 e^{-at}
\end{eqnarray}

where $  C_3=M_{1-\beta}^2K_2\lambda^{-\beta}\Gamma^2(\beta)
(\lambda-a)^{-1}(M^*+M_0\E|\varphi|_D^2)e^{a\tau}<+\infty$.\\

Similar computations can be used to estimate the term $I_4$.
\begin{eqnarray}\label{eqa4}
I_4&\leq &\E \|\int_0^t S(t-s)f(s,x(s-\rho(s)))ds\|^2\nonumber\\
&\leq &M^2
K_1\int_0^te^{-\lambda(t-s)}ds\int_0^te^{-\lambda(t-s)}\E\|x(s-\rho(s))\|^2ds\nonumber\\
&\leq &M^2K_1\lambda^{-1}\int_0^te^{-\lambda(t-s)}(M^*+M_0
\E|\varphi|_{\mathcal{D}}^2)e^{-as}e^{a\tau}ds\nonumber\\
&\leq &M^2K_1\lambda^{-1}(M^*+M_0\E|\varphi|_{\mathcal{D}}^2)e^{-at}
e^{a\tau}\int_0^te^{(a-\lambda)(t-s)}ds\nonumber\\
&\leq &M^2K_1\lambda^{-1}(\lambda-a)^{-1}(M^*+M_0\E
|\varphi|_{\mathcal{D}}^2)e^{-at}e^{a\tau}\nonumber\\
&\leq & C_4 e^{-at}
\end{eqnarray}

By using Lemma  \ref{lem2}, we get that
\begin{eqnarray}\label{eqa5}
I_5&\leq & \E\|\int_0^t S(t-s)\sigma(s)dB^H(s)\|^2\nonumber\\
&\leq & 2M^2Ht^{2H-1}\int_0^t
e^{-2\lambda(t-s)}\|\sigma(s)\|_{\mathcal{L}_2^0}^2ds,
\end{eqnarray}
If $\gamma<\lambda $,  then the following estimate holds
\begin{eqnarray}\label{eqa51}
I_5 &\leq & 2M^2Ht^{2H-1}\int_0^t
e^{-2\lambda(t-s)}e^{-2\gamma(t-s)}e^{2\gamma(t-s)}\|\sigma(s)\|_{\mathcal{L}_2^0}^2ds\nonumber\\
&\leq & 2M^2Ht^{2H-1}e^{-2\gamma t}\int_0^t
e^{-2(\lambda-\gamma)(t-s)}e^{2\gamma s
}\|\sigma(s)\|_{\mathcal{L}_2^0}^2ds\nonumber\\
&\leq & 2M^2HT^{2H-1}e^{-2\gamma t}\int_0^T
e^{-2(\lambda-\gamma)(t-s)}e^{2\gamma s
}\|\sigma(s)\|_{\mathcal{L}_2^0}^2ds\nonumber\\
&\leq & 2M^2HT^{2H-1}e^{-2\gamma t}\int_0^T e^{2\gamma s
}\|\sigma(s)\|_{\mathcal{L}_2^0}^2ds,
\end{eqnarray}

If $\gamma>\lambda $,  then the following estimate holds
\begin{equation}\label{eqa52}
I_5\leq 2M^2HT^{2H-1}e^{-2\lambda t}\int_0^T e^{2\gamma s}\|\sigma(s)\|_{\mathcal{L}_2^0}^2ds
\end{equation}
In virtue of $(\ref{eqa5}), (\ref{eqa51})$ and $(\ref{eqa52})$ we obtain
\begin{equation}\label{eqa50}
I_5\leq C_5 e^{-min(\lambda,\gamma)t}
\end{equation}
where $  C_5= 2M^2HT^{2H-1}\int_0^T e^{2\gamma s}\|\sigma(s)\|_{\mathcal{L}_2^0}^2ds<+\infty$.\\
On the other hand, by assumptions  $(\mathcal{H}.1)$  and
$(\mathcal{H}.6)$, we get

\begin{eqnarray}\label{eqa6}
I_6 &\leq &\E \|\int_0^t \int_ {\mathcal{U}}
S(t-s)h(s,x(s-\theta(s)),y)\widetilde{N}(ds,dy)\|^2\nonumber\\
&\leq &M^2 \E\int_0^te^{-2\lambda(t-s)} \int_ {\mathcal{U}}
\|h(s,x(s-\theta(s)),y)\|^2\nu(dy)ds\nonumber\\
&\leq &M^2 K_3 \int_0^te^{-2\lambda(t-s)}
\E\|x(s-\theta(s)\|^2ds\nonumber\\
&\leq &M^2 K_3 \int_0^te^{-2\lambda(t-s)}
(M^*+M_0\E|\varphi|_{\mathcal{D}}^2)e^{-as}e^{a\tau}ds\nonumber\\
&\leq &M^2 K_3
(M^*+M_0\E|\varphi|_{\mathcal{D}}^2)e^{-at}e^{a\tau}\int_0^te^{(-2\lambda+a)(t-s)}ds\nonumber\\
&\leq &M^2 K_3 (M^*+M_0\E|\varphi|_{\mathcal{D}}^2)e^{a\tau}
(2\lambda-a)^{-1}e^{-at}\nonumber\\
&\leq &  C_6 e^{-at},
\end{eqnarray}
where $ C_6= M^2 K_3 (M^*+M_0\E|\varphi|_{\mathcal{D}}^2)e^{a\tau}
(2\lambda-a)^{-1}<+\infty.$\\
 Inequalities $(\ref{eqa1}), (\ref{eqa2}),(\ref{eqa3}), (\ref{eqa4}),(\ref{eqa50})$ and $(\ref{eqa6})$ together imply taht

$$
\E\|\Psi(x)(t)\|^2\leq \overline{M}e^{-\overline{a} t}\;,\;\;
t\geq0.
$$
for some $\overline{M}> 0$ and $\overline{a}>0$.\\
Next we showt hat $\Psi(x)(t)$ is c\`adl\`ag process on $\Pi$. Let
$0 <t<T$  and $h>0$  be sufficiently small. Then for any fixed
$x(t)\in \Pi$, we have
\begin{eqnarray*}
&\E\|&\Psi(x)(t+h)-\Psi(x)(t)\|^2\\
&\leq & 6\E\|(S(t+h)-S(t))(\varphi(0)+g(0,\varphi(-r(0))))\|^2\\
&+&6\E\|g(t+h,x(t+h-r(t+h)))-g(t,x(t-r(t)))\|^2\\
&+&6\E\|\int_0^{t+h} AS(t+h-s)g(s,x(s-r(s))ds -\int_0^t AS(t-s)g(s,x(s-r(s))ds\|^2\\
&+&6\E\|\int_0^{t+h} S(t+h-s)f(s-\rho (s))ds-\int_0^t S(t-s)f(s-\rho (s))ds\|^2\\
&+&6\E\|\int_0^{t+h}S(t+h-s)\sigma(s)dB^H(s)-\int_0^tS(t-s)\sigma(s)dB^H(s)\|^2\\
&+&6\E \|\int_0^{t+h} \int_ {\mathcal{U}}
S(t+h-s)h(s,x(s-\theta(s)),y)\widetilde{N}(ds,dy)\\
&-&\int_0^{t} \int_ {\mathcal{U}}
S(t-s)h(s,x(s-\theta(s)),y)\widetilde{N}(ds,dy)\|^2\\
 &=& 6 \sum_{1\leq i \leq 6}I_i(h).
\end{eqnarray*}
For $i=1,2,...,5,$  the
 terms $I_i(h)$ can be treated in the same way as in the proof  of Theorem 5
 in \citet{boufoussi3}.\\ For the term $I_6(h)$, we have  by assumption $(\mathcal{H}.1)$

\begin{eqnarray}\label{b}
I_6(h) &\leq &2\E \|\int_0^t \int_ {\mathcal{U}}
(S(t+h-s)-S(t-s))h(s,x(s-\theta(s)),y)\widetilde{N}(ds,dy)\|^2\nonumber\\
&+ &2\E \|\int_t^{t+h} \int_ {\mathcal{U}}
S(t+h-s)h(s,x(s-\theta(s)),y)\widetilde{N}(ds,dy)\|^2\nonumber\\
&\leq &2M^2\|S(h)-I\|\E \int_0^t \int_ {\mathcal{U}} e^{-2\lambda(t-s)}
 \|h(s,x(s-\theta(s)),y)\|^2\nu(dy)ds\nonumber\\
&+ &2 M^2\E \int_t^{t+h} \int_ {\mathcal{U}} e^{-2\lambda(t+h-s)}
\|h(s,x(s-\theta(s)),y)\|^2\nu(dy)ds
\end{eqnarray}
By assumption $(\mathcal{H}.6)$, we have
\begin{eqnarray}\label{b1}
 \E\int_0^t \int_ {\mathcal{U}} e^{-2\lambda(t-s)}
\|h(s,x(s-\theta(s)),y)\|^2\nu(dy)ds
&\leq&K_3\int_0^te^{-2\lambda(t-s)}
\E\|x(s-\theta(s)\|^2ds\nonumber\\
&\leq&K_3\int_0^te^{-2\lambda(t-s)}
(M^*+M_0\E|\varphi|_{\mathcal{D}}^2)e^{-as}e^{a\tau}ds\nonumber\\
&\leq & K_3 (M^*+M_0\E|\varphi|_{\mathcal{D}}^2)e^{a\tau}(2\lambda-a)^{-1}e^{-at}
\end{eqnarray}

Inequality $(\ref{b1})$ imply that there exist a constant $B>0$ such that

\begin{equation}\label{b2}
 \E\int_0^t \int_ {\mathcal{U}}
e^{-2\lambda(t-s)} \|h(s,x(s-\theta(s)),y)\|^2\nu(dy)ds\leq B
\end{equation}
Using the  strong continuity of $S(t)$ together with inequalities $(\ref{b})$ and $(\ref{b2})$ we obtain that  $I_6(h)\rightarrow0$ as $h\rightarrow0$.

The above arguments show that $\Psi(x)(t)$ is c\`adl\`ag process.
Then, we conclude that $\Psi(\Pi)\subset \Pi$.\\
Now, we are going to show that $\Psi:\Pi\rightarrow\Pi$ is a
contraction mapping.  For this end, fix $x,y\in \Pi$,  we have
\begin{eqnarray}\label{c}
\E\|\Psi(x)(t)&-&\Psi(y)(t)\|^2\nonumber\\
&\leq& 4\E\|g(t,x(t-r(t)))-g(t,y(t-r(t)))\|^2\nonumber\\
&&+ 4\E\|\int_0^tAS(t-s)(g(s,x(s-r(s)))-g(s,y(s-r(s)))ds\|^2\nonumber\\
&&+ 4\E\|\int_0^tS(t-s)(f(s,x(s-\rho(s)))-f(s,y(s-\rho(s)))ds\|^2\nonumber\\
&&+ 4\E\int_0^tS(t-s) (\int_ {\mathcal{U}}
h(s,x(s-\theta(s)),z)-h(s,y(s-\theta(s)),z))\widetilde{N}(ds,dz)\|^2\nonumber\\
&&:= 4(J_1+J_2+J_3+J_4).
\end{eqnarray}
We estimate the various terms of the right hand of $(\ref{c})$ separately.\\
For the first term, we have

\begin{eqnarray}\label{c1}
J_1&\leq&\E\|g(t,x(t-r(t)))-g(t,y(t-r(t)))\|^2\nonumber\\
&\leq& K_2\|(-A)^{-\beta}\|^2\E\|x(s-r(s))-y(s-r(s))\|^2\nonumber\\
&\leq&K_2\|(-A)^{-\beta}\|^2\sup_{s\geq0} \E\|x(s)-y(s)\|^2.
\end{eqnarray}

For the second term, combing Lemma \ref{lem3} and H\"older's
inequality, we get

\begin{eqnarray}\label{c2}
J_2&\leq&\E\|\int_0^tAS(t-s)(g(s,x(s-r(s)))-g(s,y(s-r(s)))ds\|^2\nonumber\\
 &\leq&K_2
 M_{1-\beta}^2\int_0^t(t-s)^{\beta-1}e^{-\lambda(t-s)}ds
 \int_0^t(t-s)^{\beta-1}e^{-\lambda(t-s)}\E\|x(s-r(s))-y(s-r(s))\|^2ds\nonumber\\
&\leq&K_2
 M_{1-\beta}^2\lambda^{-\beta}\Gamma(\beta)\int_0^t(t-s)^{\beta-1}e^{-\lambda(t-s)} ds (\sup_{s\geq0} \E\|x(s)-y(s)\|^2)\nonumber\\
&\leq&K_2
 M_{1-\beta}^2\lambda^{-2\beta}\Gamma(\beta)^2 \sup_{s\geq0} \E\|x(s)-y(s)\|^2
\end{eqnarray}

For the third term, by assumption $(\mathcal{H}.2)$, we get that

\begin{eqnarray}\label{c3}
J_3&\leq&\E\|\int_0^tS(t-s)(f(s,x(s-\rho(s)))-f(s,y(s-\rho(s)))ds\|^2\nonumber\\
&\leq& K_1 M^2\int_0^te^{-\lambda (t-s)}ds\int_0^te^{-\lambda
(t-s)}\E\|x(s-\rho(s))-y(s-\rho(s))\|^2ds\nonumber\\
&\leq&K_1 M^2 \lambda^{-2}\sup_{s\geq0} \E\|x(s)-y(s)\|^2
\end{eqnarray}
For the last term, by using assumption $(\mathcal{H}.6)$, we get

\begin{eqnarray}\label{c4}
J_4&\leq&\E\int_0^tS(t-s) (\int_ {\mathcal{U}}
h(s,x(s-\theta(s)),z)-h(s,y(s-\theta(s)),z))\widetilde{N}(ds,dz)\|^2\nonumber\\
&\leq& M^2\E\int_0^te^{-2\lambda(t-s)} (\int_ {\mathcal{U}}
\|h(s,x(s-\theta(s)),z)-h(s,y(s-\theta(s)),z))\|^2\nu(dz)ds\nonumber\\
&\leq& M^2 K_3(2\lambda)^{-1} \sup_{s\geq0} \E\|x(s)-y(s)\|^2.
\end{eqnarray}
Thus, inequality $(\ref{c1}), (\ref{c2}), (\ref{c3})$  and  $(\ref{c4})$ together imply

\begin{eqnarray*}
\sup_{t\geq 0}\E\|\Psi(x)(t)-\Psi(y)(t)\|^2\leq
&4&(K_2\|(-A)^{-\beta}\|^2+K_2
 M_{1-\beta}^2\lambda^{-2\beta}\Gamma(\beta)^2+K_1 M^2 \lambda^{-2}\\&&+M^2
 K_3(2\lambda)^{-1})
(\sup_{t\geq0} \E\|x(t)-y(t)\|^2).
\end{eqnarray*}

Therefore by the condition of the theorem it follows that $\Psi$
is a contractive mapping. Thus by the Banach fixed point theorem
$\Psi$ has the fixed point $x(t)\in\Pi$, which is a unique mild
solution to  (\ref{eq1}) satisfying $x(s)=\varphi(s)$ on
$[-\tau,0]$. \\
By the definition of the space $\Pi$ this solution is
exponentially stable in mean square.  This completes the proof.
 \end{proof}


\end{document}